\newcommand{\bracket}[1]{\ensuremath{\left[ #1 \right]}}
\newcommand{\parenth}[1]{\ensuremath{\left( #1 \right)}}
\newcommand{\tr}[1]{\mbox{tr}\ensuremath{\negthickspace\bracket{#1}}}
\newcommand{\deriv}[2]{\ensuremath{\frac{\partial #1}{\partial #2}}}
\newcommand{\G}{\ensuremath{\mathsf{G}}}
\newcommand{\SO}{\ensuremath{\mathsf{SO(3)}}}
\newcommand{\T}{\ensuremath{\mathsf{T}}}
\renewcommand{\L}{\ensuremath{\mathsf{L}}}
\newcommand{\so}{\ensuremath{\mathfrak{so}(3)}}
\renewcommand{\Re}{\ensuremath{\mathbb{R}}}
\newcommand{\D}{\ensuremath{\mathbf{D}}}
\newcommand{\pair}[1]{\ensuremath{\left\langle #1 \right\rangle}}
\newcommand{\met}[1]{\ensuremath{\langle\!\langle #1 \rangle\!\rangle}}
\newcommand{\Ad}{\ensuremath{\mathrm{Ad}}}
\newcommand{\ad}{\ensuremath{\mathrm{ad}}}
\newcommand{\g}{\ensuremath{\mathfrak{g}}}
\title{\LARGE \bf Variational Symplectic Accelerated Optimization on Lie Groups}
\author{%
    Taeyoung Lee, Molei Tao, and Melvin Leok%
    \thanks{Taeyoung Lee, Mechanical and Aerospace Engineering, George Washington University, Washington, DC 20052. {\tt tylee@gwu.edu}}%
    \thanks{Molei Tao, Mathematics, Georgia Institute of Technology, Atlanta, GA 30332. {\tt mtao@gatech.edu}}
    \thanks{Melvin Leok, Mathematics, University of California--San Diego, La Jolla, CA 92093. {\tt mleok@ucsd.edu}}
    \thanks{The research was partially supported by NSF under the grants CNS-1837382, CMMI-1760928, DMS-1813635, IIP-1747760, DMS-1847802, ECCS-1936776, and by AFOSR FA9550-18-1-0288 and DoD 13106725.}
}
\newcommand{\RomanNumeralCaps}[1]{\textup{\uppercase\expandafter{\romannumeral#1}}}
\newtheorem{prop}{Proposition}
\begin{document}

\allowdisplaybreaks

\maketitle \thispagestyle{empty} \pagestyle{empty}

\begin{abstract}
There has been significant interest in generalizations of the Nesterov accelerated gradient descent algorithm due to its improved performance guarantee compared to the standard gradient descent algorithm, and its applicability to large scale optimization problems arising in deep learning.
A particularly fruitful approach is based on numerical discretizations of differential equations that describe the continuous time limit of the Nesterov algorithm, and a generalization involving time-dependent Bregman Lagrangian and Hamiltonian dynamics that converges at an arbitrarily fast rate to the minimum.
We develop a Lie group variational discretization based on an extended path space formulation of the Bregman Lagrangian on Lie groups, and analyze its computational properties with two examples in attitude determination and vision-based localization.  
\end{abstract}

\section{Introduction}

Nesterov's accelerated gradient descent algorithm~\cite{Nes83} was introduced in 1983, and it exhibits the convergence rate of $\mathcal{O}(1/k^2)$ when applied to a convex objective function, which is faster than the $\mathcal{O}(1/k)$ convergence rate of standard gradient descent methods.
It is shown in~\cite{nesterov2003introductory} that this rate of convergence is optimal for the class of first-order gradient methods.
This improved rate of convergence over the standard gradient method is referred to as acceleration, and there is a great interest in developing systematic approaches to the construction of efficient accelerated optimization algorithms, driven by potential applications in deep learning.

A continuous time limit of the Nesterov algorithm was studied in~\cite{su2016differential}, whose flow converges to the minimum at $\mathcal{O}(1/t^2)$, and this was generalized in~\cite{wibisono2016variational} using a time-dependent Bregman Lagrangian and Hamiltonian to obtain higher-order convergence of $\mathcal{O}(1/t^p)$ for arbitrary $p\geq 2$.
However, it has been shown that discretizing Bregman dynamics is not trivial as common discretizations fail to achieve the higher convergence rate guaranteed in the continuous time limit.
As such, there have been several attempts to construct accelerated optimization algorithms using geometric structure-preserving discretizations of the Bregman dynamics~\cite{betancourt2018symplectic}. 

A natural class\footnote{Note that other classes of discretization methods exist, such as those based on splitting (e.g., \cite{HaLuWa2006,tao2020variational}) and composition (e.g., \cite{tao2010nonintrusive}), and such approaches also arise in variational discretization \cite{MarWesAN01}.} 
of geometric numerical integrators~\cite{HaLuWa2006} for discretizing such Lagrangian or Hamiltonian systems is variational integrators~\cite{MarWesAN01,LeZh2009}.
They are constructed by a discrete analogue of Hamilton's variational principle, and therefore, their numerical flows are symplectic.
They also satisfy a discrete Noether's theorem that relates symmetries with momentum conservation properties, and further exhibit excellent exponentially long-time energy stability.
One complication is that such methods are typically developed for autonomous Lagrangian and Hamiltonian systems on the Euclidean space. 
To address this, variational integrators have been developed on a Lie group~\cite{LeeLeoCMAME07}, and time-adaptive Hamiltonian variational integrators have been proposed~\cite{DuScLe2021}.

In this paper, we focus on the optimization problem to minimize an objective function defined on an a Lie group. Optimization on a manifold or a Lie group appears in various areas of machine learning, engineering, and applied mathematics~\cite{hu2020brief,absil2009optimization}, and respecting the geometric structure of manifolds yields more accurate and efficient optimization schemes, when compared to methods based on embeddings in a higher-dimensional Euclidean space with algebraic constraints, or using local coordinates. 

In particular, we formulate a Bregman Lagrangian system on a Lie group, and we further discretize it using the extended Lie group variational integrator to construct an intrinsic accelerated optimization scheme, which inherits the desirable properties of variational integrators while also preserving the group structure. 
Compared with~\cite{DuScLe2021} where the evolution of the stepsize is prescribed, the proposed scheme adaptively adjusts the stepsize according to the extended variational principle at the cost of increased computational load. 
The resulting computational properties of the proposed approach are analyzed with two examples in attitude determination and vision-based localization,
where it is observed that the scheme exhibits an interesting convergence of the adaptive stepsize, and the variational discretization provides robustness against the choice of stepsize, which is exploited in the numerical experiments to improve computational efficiency. 
We also present benchmark studies against other discretization schemes applied to the Bregman dynamics, and other accelerated optimization schemes on a Lie group~\cite{tao2020variational}.

\section{Extended Lagrangian Mechanics}

This section presents Lagrangian mechanics for non-autonomous systems on a Lie group. 
It is referred to as \textit{extended} Lagrangian mechanics as the variational principle is extended to include reparamerization of time~\cite{MarWesAN01}.
These are developed in both of continuous-time and discrete-time formulations. The latter yields a \textit{Lie group variational integrator}~\cite{LeeLeoCMAME07}, which will be applied to accelerated optimization using the Bregman Lagrangian in the next section.

Consider  an $n$-dimensional Lie group $\G$.
Let $\g$ be the associated Lie algebra, or the tangent space at the identity, i.e., $\g = \T_e\G$.
Consider a left trivialization of the tangent bundle of the group $\T\G \simeq \G\times \g$, $(g,\dot g)\mapsto (g, L_{g^{-1}}\dot g)\equiv(g,\xi)$
More specifically, let $\L:\G\times\G\rightarrow\G$ be the left action defined such that $\L_g h = gh$ for $g,h\in\G$.
Then the left trivialization is a map $(g,\dot g)\mapsto (g, L_{g^{-1}}\dot g)\equiv(g,\xi)$, where $\xi\in\g$, and the kinematics equation can be written as
\begin{align}
    \dot g = g\xi. \label{eqn:g_dot}
\end{align}
Further, suppose $\g$ is equipped with an inner product $\pair{\cdot, \cdot}$, which induces an inner product on $\T_g\G$ via left trivialization.
For any $v,w\in\T_g\G$, $\pair{w,v}_{\T_g\G} = \pair{ \T_g \L_{g^{-1}} v, \T_g \L_{g^{-1}} w}_\g$. 
Given the inner product, we identify $\g\simeq \g^*$ and $\T_g \G \simeq \T^*_g \G\simeq G\times \g^*$ via the Riesz representation. Throughout this paper, the pairing is also denoted by the dot product $\cdot$.
Let $\mathbf{J}:\g\rightarrow\g^*$ be chosen such that $\pair{\mathbf{J}(\xi),\zeta}$ is positive-definite and symmetric as a bilinear form of $\xi,\zeta\in\g$.
Define the metric $\met{\cdot,\cdot}:\g\times\g\rightarrow\Re$ with $\met{\xi,\zeta} = \pair{\mathbf{J}(\xi),\zeta}$.
This serves as a left-invariant Riemmanian metric on $\G$.
Also $\|\xi\|^2 = \met{\xi,\xi}$ for any $\xi\in\g$.
The adjoint operator is denoted by $\Ad_g:\g\rightarrow\g$, and the ad operator is denoted by $\ad_\xi:\g\rightarrow\g$. See, for example~\cite{MarRat99} for detailed preliminaries. 

\subsection{Continuous-Time Extended Lagrangian Mechanics}

Consider a non-autonomous (left-trivialized) Lagrangian $L(t,g,\xi):\Re\times\G\times\g\rightarrow \Re$ on the \textit{extended state space}.
The corresponding \textit{extended path space} is composed of the curves $(c_t(a),c_g(a))$ on $\Re\times \G$ parameterized by $a>0$.
To ensure that the reparameterized time increases monotonically, we require $c'_t(a) > 0$. 
For a given time interval $[t_0,t_f]$, the corresponding interval $[a_0,a_f]$ for $a$ is chosen such that $t_0=c_t(a_0)$ and $t_f=c_t(a_f)$.
For any path $(c_t(a),c_g(a))$ over $[a_0,a_f]$ in the extended space, the \textit{associated curve} is 
\begin{align}
    g(t) = c_g(c_t^{-1}(t)),\label{eqn:ac}
\end{align}
on $\G$ over the time interval $[t_0,t_f]$.
For a given extended path, define the \textit{extended action integral} as
\begin{align}
    \mathfrak{G}(c_t,c_g) = \int_{t_0}^{t_f} L(t,g,\xi)\bigg|_{g(t) = c_g(c_t^{-1}(t))} dt,\label{eqn:AI}
\end{align}
where the Lagrangian is evaluated on the associated curve \eqref{eqn:ac}, and $\xi$ satisfies the kinematics equation  \eqref{eqn:g_dot}.

Taking the variation of $\mathfrak{G}$ with respect to the extended path, we obtain the Euler--Lagrange equation according to the variational principle in the extended phase space. 
As discussed in~\cite[Sec. 4.2.2]{MarWesAN01}, the resulting Euler--Lagrange equations depends only on the associated curve \eqref{eqn:ac}, not on the extended path $(c_t,c_g)$ itself, and the variational principle does not dictate how the curve should be reparameterized. 

Further, the resulting Euler--Lagrange equation share the exactly same form as (unextended) Lagrangian mechanics for the associated curve. 
As such, the Euler--Lagrange equation for non-autonomous Lagrangian $L(t,g,\xi):\Re\times\G\times\g\rightarrow \Re$ can be written as
\begin{align}
    \frac{d}{dt}\!\parenth{\deriv{L}{\xi}} - \ad^*_\xi \deriv{L}{\xi} - \T^*_e \L_g (\D_g L) = 0, \label{eqn:EL}
\end{align}
where $\D_g$ stands for the differential with respect to $g$ (see~\cite[Sec. 8.6.3]{LeeLeo17} for derivation of the above equation for autonomous Lagrangians).

Introducing the Legendre transform $\mu = \deriv{L}{\xi} \in\g^*$, and assuming that it is invertible, the Euler--Lagrange equation can be rewritten as
\begin{align}
    \dot \mu - \ad^*_{\xi} \mu - \T^*_e \L_g (\D_g L) = 0. \label{eqn:HE}
\end{align}

\subsection{Extended Lie Group Variational Integrator}

Variational integrators are geometric numerical integration schemes that can be viewed as discrete-time mechanics derived from a discretization of the variational principle for Lagrangian mechanics~\cite{MarWesAN01}.
The discrete-time flows of variational integrators are symplectic and they exhibit a discrete analogue of Noether's theorem.
This provides long-term structural stability in the resulting numerical simulations. 
For Lagrangian mechanics evolving on a Lie group, the corresponding Lie group variational integrators were developed in~\cite{LeeLeoCMAME07}.

Here, we develop extended Lie group variational integrators by discretizing the extended variational principle presented above, following the general framework of~\cite{MarWesAN01}.
The \textit{extended discrete path space} is composed of the sequence $\{(t_k, g_k)\}_{k=0}^N$ on $\Re\times\G$, satisfying $t_{k+1}>t_k$.
Next, the discrete kinematics equation is chosen to be
\begin{align}
    g_{k+1} = g_k f_k, \label{eqn:gkp}
\end{align}
for $f_k \in\G$ representing the relative update over a single timestep. 
The discrete Lagrangian $L_d(t_k, t_{k+1}, g_k, f_k): \Re\times\Re\times\G\times\G\rightarrow \Re$ is chosen such that the following \textit{extended discrete action sum}
\begin{align}
    \mathfrak{G}_d(\{(t_k, g_k)\}_{k=0}^N) = \sum_{k=0}^{N-1} L_d(t_k, t_{k+1}, g_k, f_k), \label{eqn:Gd}
\end{align}
approximates \eqref{eqn:AI}. 

\begin{prop}
    The discrete path $\{(g_k,f_k)\}_{k=0}^{N-1}$ that extremizes the discrete action sum \eqref{eqn:Gd} subject to fixed endpoints satisfies the following discrete Euler--Lagrange equation,
    \begin{gather}
        \T^*_e\L_{g_k}(\D_{g_k} L_{d_k})- \Ad^*_{f_k^{-1}} (\T^*_e\L_{f_k}(\D_{f_k} L_{d_k}))\nonumber \\
        + \T^*_e\L_{f_{k-1}}(\D_{f_{k-1}} L_{d_{k-1}}) =0,\label{eqn:DEL}\\
        \D_{t_k} L_{d_{k-1}} + \D_{t_k} L_{d_k} = 0, \label{eqn:DELt}
    \end{gather}
    which together with the discrete kinematic equation \eqref{eqn:gkp} defines an extended Lie group variational integrator.
\end{prop}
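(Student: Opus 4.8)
The plan is to compute the variation of the extended discrete action sum $\mathfrak{G}_d$ with respect to arbitrary variations of the discrete path $\{(t_k, g_k)\}_{k=0}^N$ that fix the endpoints, and then invoke the discrete variational principle $\delta\mathfrak{G}_d = 0$ to extract the stated equations as the coefficients of the independent variations. Since the discrete path lives on $\Re\times\G$, a variation is a pair: a variation $\delta t_k$ of the time nodes (with $\delta t_0 = \delta t_N = 0$), and a variation of the group nodes, which on a Lie group is most naturally parameterized by a curve $g_k^\epsilon = g_k\exp(\epsilon\eta_k)$ with $\eta_k\in\g$ (and $\eta_0 = \eta_N = 0$). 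The key preliminary computation is to express the induced variation $\delta f_k$ of the relative update $f_k = g_k^{-1}g_{k+1}$ in terms of $\eta_k$ and $\eta_{k+1}$; a standard calculation on Lie groups gives $\T_e\L_{f_k^{-1}}(\delta f_k) = \eta_{k+1} - \Ad_{f_k^{-1}}\eta_k$, which is exactly the discrete analogue of the structure that produces the $\Ad^*_{f_k^{-1}}$ term in \eqref{eqn:DEL}.

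Next I would substitute these into $\delta\mathfrak{G}_d = \sum_{k=0}^{N-1}\delta L_d(t_k, t_{k+1}, g_k, f_k)$, expanding each term via the chain rule into contributions from $\delta t_k$, $\delta t_{k+1}$, $\D_{g_k}L_{d_k}$ paired with $\eta_k$, and $\D_{f_k}L_{d_k}$ paired with $\delta f_k$. The heart of the argument is then a pair of discrete integration-by-parts (summation-by-parts / reindexing) manipulations: collecting all terms multiplying $\eta_k$ for a fixed interior index $k$ pulls contributions from $L_{d_k}$ (through its $g_k$ slot, giving $\T^*_e\L_{g_k}(\D_{g_k}L_{d_k})$ and through its $f_k$ slot via $\delta f_k$, giving $-\Ad^*_{f_k^{-1}}\T^*_e\L_{f_k}(\D_{f_k}L_{d_k})$) and from $L_{d_{k-1}}$ (through its $f_{k-1}$ slot via $\delta f_{k-1}$, giving $+\T^*_e\L_{f_{k-1}}(\D_{f_{k-1}}L_{d_{k-1}})$); collecting all terms multiplying $\delta t_k$ for a fixed interior $k$ gives $\D_{t_k}L_{d_{k-1}} + \D_{t_k}L_{d_k}$. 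Since $\eta_k$ and $\delta t_k$ are independent and arbitrary for $k = 1,\dots,N-1$, the fundamental lemma of the (discrete) calculus of variations forces each bracketed expression to vanish, yielding \eqref{eqn:DEL} and \eqref{eqn:DELt}. Finally, appending the discrete kinematics \eqref{eqn:gkp} by definition completes the specification of the integrator.

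**The main obstacle** I anticipate is bookkeeping rather than conceptual: correctly tracking the two distinct ways $f_k$ depends on the group variations (through both $g_k$ and $g_{k+1}$, hence both $\eta_k$ and $\eta_{k+1}$) and ensuring the $\Ad^*$ transpose appears on the correct term with the correct argument after the reindexing $k\mapsto k-1$. One must be careful that the pairing $\D_{f_k}L_{d_k}\cdot\delta f_k = \T^*_e\L_{f_k}(\D_{f_k}L_{d_k})\cdot\T_e\L_{f_k^{-1}}(\delta f_k) = \T^*_e\L_{f_k}(\D_{f_k}L_{d_k})\cdot(\eta_{k+1} - \Ad_{f_k^{-1}}\eta_k)$, and that the dualization of $\Ad_{f_k^{-1}}$ acting on $\eta_k$ produces $\Ad^*_{f_k^{-1}}$ acting on the momentum-like quantity. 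The time-variation part is comparatively routine, essentially identical to the Euclidean extended case in \cite{MarWesAN01}, since $t_k$ is an ordinary real parameter; the only subtlety there is the endpoint condition $\delta t_0 = \delta t_N = 0$ which removes the boundary terms. Throughout, the boundary terms at $k=0$ and $k=N$ drop out precisely because the endpoints are held fixed, which is what makes the reindexed sums telescope cleanly into \eqref{eqn:DEL}–\eqref{eqn:DELt}.
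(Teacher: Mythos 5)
Your proposal is correct and follows essentially the same route as the paper: the same key identity $f_k^{-1}\delta f_k = -\Ad_{f_k^{-1}}\eta_k + \eta_{k+1}$ derived from the discrete kinematics, the same substitution into $\delta\mathfrak{G}_d$ followed by reindexing under the fixed-endpoint conditions $\eta_0=\eta_N=0$, $\delta t_0=\delta t_N=0$, and the same appeal to the arbitrariness of the interior $\eta_k$ and $\delta t_k$ to extract \eqref{eqn:DEL} and \eqref{eqn:DELt}. No gaps.
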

\begin{proof}
    From \eqref{eqn:gkp}, $\delta f_k = - g_k^{-1}( \delta g_k ) g_k^{-1} g_{k+1} + g_k^{-1}\delta g_{k+1}$.
    Since $\delta g_k$ can be written as $\delta g_k = g_k \eta_k $ for $\eta_k\in \g$, 
    \begin{align}
        f_k^{-1}\delta f_k = -\Ad_{f_k^{-1}} \eta_k + \eta_{k+1}.\label{eqn:del_fk}
    \end{align}

    Take the variation of \eqref{eqn:Gd} and substitute \eqref{eqn:del_fk} to obtain
    \begin{align*}
        \delta \mathfrak{G}_d  = \sum_{k=0}^{N-1}
        & \T^*_e\L_{g_k}(\D_{g_k} L_{d_k}) \cdot \eta_k \\
        & + \T^*_e\L_{f_k}(\D_{f_k} L_{d_k}) \cdot (-\Ad_{f_k^{-1}} \eta_k + \eta_{k+1}) \\
        & + \D_{t_k} L_{d_k}\cdot \delta t_k + \D_{t_{k+1}} \D_{d_k}\cdot \delta t_{k+1}.
    \end{align*}
    Since the endpoints are fixed, we have $\eta_0=0$ and $\delta t_0 = 0$.
    Therefore in the above expression, the range of summation for the terms paired with $\eta_k$ and $\delta t_k$ can be reduced to $1\leq k\leq N-1$. 
    Also, using $\eta_N=0$ and $\delta t_N=0$, for the other terms paired with $\eta_{k+1}$ and $\delta t_{k+1}$, the terms can be reindexed by reducing the subscripts by one and summed over the same range.
    According to the variational principle, $\delta\mathfrak{G}_d = 0$ for any $\eta_k$ and $\delta t_k$, which yields \eqref{eqn:DEL} and \eqref{eqn:DELt}.
\end{proof}
The most notable difference compared to the continuous-time counterpart is that in addition to the discrete Euler--Lagrange equation \eqref{eqn:DEL}, we have the additional equation \eqref{eqn:DELt} for the evolution of the discrete time. 
This is because the discrete action sum $\mathfrak{G}_d$ depends on the complete extended path $\{(t_k,g_k)\}_{k=1}^N$.
Whereas the continuous-time action $\mathfrak{G}$ is only a function of the associated curve \eqref{eqn:ac}.

The discrete Euler--Lagrange equation for the discrete time~\eqref{eqn:DELt} is associated with the energy.
Define the discrete energy to be
\begin{align}
    E^+_k &= - \D_{t_{k+1}} L_{d_k},\label{eqn:Ep}\\
    E^-_k &= \D_{t_{k}} L_{d_k}.\label{eqn:Em}
\end{align}
Then, \eqref{eqn:DELt} can be rewritten as
\begin{align}
    E^+_{k-1} = E^-_k,\label{eqn:DELE}
\end{align}
which reflects the evolution of the discrete energy. 
When the discrete Lagrangian is autonomous, \eqref{eqn:DELE} implies the conservation of discrete energy, thereby yielding a symplectic-energy-momentum integrator~\cite{KaMaOr1999}.


To implement \eqref{eqn:DEL} and \eqref{eqn:DELt} as a numerical integrator, it is more convenient to introduce the \textit{extended discrete Legendre transforms}, $\mathbb{F}^\pm L_{d_k}: \Re\times\Re \times \G \times \G \rightarrow \Re\times \Re\times\G\times\g^*$ as
\begin{align}
    \mathbb{F}^+ L_{d_k} (t_k,t_{k+1}, g_k,f_k) & = (t_{k+1}, E_{k+1}, g_{k+1}, \mu_{k+1}),\\
    \mathbb{F}^- L_{d_k} (t_k,t_{k+1}, g_k,f_k) & = (t_k, E_k, g_{k}, \mu_{k}).
\end{align}
where
\begin{align}
    \mu_k & = -\T^*_e\L_{g_k}(\D_{g_k} L_{d_k})+ \Ad^*_{f_k^{-1}} (\T^*_e\L_{f_k}(\D_{f_k} L_{d_k})),\label{eqn:muk}\\
    \mu_{k+1} & = \T^*_e\L_{f_k} (\D_{f_k} L_{d_k}),\label{eqn:mukp}
\end{align}
and $E_{k+1}$ and $E_k$ are given by \eqref{eqn:Ep} and \eqref{eqn:Em}, respectively. 

The resulting discrete flow map is defined by $\mathbb{F}^+L_{d_k} \circ (\mathbb{F}L_{d_k})^{-1}$. 
More specifically, for given $(t_k, E_k, g_k, \mu_k)$, \eqref{eqn:Em} and \eqref{eqn:muk} are solved together for $t_{k+1},f_k$ with the constraint $t_{k+1}>t_k$.
Then, $(E_{k+1}, g_{k+1},\mu_{k+1})$ are computed by \eqref{eqn:Ep}, \eqref{eqn:gkp}, and \eqref{eqn:mukp}, respectively.
This yields the discrete flow map $(t_k, E_k, g_k, \mu_k)\rightarrow(t_{k+1}, E_{k+1}, g_{k+1}, \mu_{k+1})$ consistent with \eqref{eqn:DEL} and \eqref{eqn:DELt}.
While the flow map is expressed in terms of $E$ for convenience, the initial value of $E_0$ is often selected by choosing the initial timestep $h_0$ and calculating the corresponding value of $E_0$ through \eqref{eqn:Em}.
This inherits the desirable properties of variational integrators, and the group structure is also preserved through \eqref{eqn:gkp}.

\section{Bregman Lagrangian Systems on $\G$}\label{sec:Breg}

\newcommand{\obj}{\mathsf{f}}

Let $\obj:\G\rightarrow\Re$ be a real-valued smooth function on $\G$.
We focus on the optimization problem:
\begin{align}
    \min_{g\in\G} \obj (g).
\end{align}
A variational accelerated optimization scheme for the above problem was developed in~\cite{tao2020variational}, where the Nesterov accelerated gradient (NAG) descent on a finite-dimensional vector space was intrinsically generalized to a Lie group. 
In this section, we introduce an intrinsic formulation of Bregman Lagrangian dynamics~\cite{wibisono2016variational}, which encompasses a larger class of accelerated optimization scheme, including NAG.
More importantly, the continuous dynamics guarantees polynomial convergence rates up to an arbitrary order.

\subsection{Continuous-Time Bregman Dynamics}

The Bregman Lagrangian $L(t,g,\xi):\Re\times\G\times\g\rightarrow$ is 
\begin{align}
    L(t,g,\xi) = \frac{t^{\lambda p+1}}{2p} \|\xi\|^2 - C p t^{(\lambda+1)p-1} \obj (g),\label{eqn:BL}
\end{align}
where $\|\xi\|^2 = \met{\xi,\xi}=\pair{\mathbf{J}(\xi),\xi}$, for $p, C>0$, and $\lambda\geq 1$.
When $\G=\Re^n$ and $\lambda=1$, this recovers the Bregman Lagrangian for vector spaces~\cite{wibisono2016variational}, and it yields the continuous-time limit of Nesterov's accelerated gradient descent for $p=2$~\cite{nesterov2005smooth}.
Also, in case $p = 3$, it corresponds to the continuous-time limit of Nesterov’s accelerated cubic-regularized Newton’s method~\cite{nesterov2008accelerating}.
When $\G$ is considered as a Riemannian manifold, this corresponds to the $p$-Bregman Lagrangian in~\cite{duruisseaux2021variational}.
The additional term $\lambda$ accounts for the sectional curvature and diameter of the manifold~\cite{alimisis2020continuous}. 

The left-trivialized derivative of the objective function is
\begin{align}
    \nabla_\L \obj(g) = \T_e^*\L_g (\D_g \obj(g)).\label{eqn:grad}
\end{align}
Applying \eqref{eqn:EL} to \eqref{eqn:BL}, the corresponding Euler--Lagrange equations are given below.
\begin{prop}\label{prop:EL_Breg}
    The Euler--Lagrange equations corresponding to the Bregman Lagrangian \eqref{eqn:BL} are
    \begin{align}
        \frac{d \mathbf{J}(\xi)}{dt} + \frac{\lambda p+1}{t}\mathbf{J}(\xi) - \ad^*_\xi \mathbf{J}(\xi)
        + Cp^2 t^{p-2} \nabla_\L \obj (g) =0, \label{eqn:EL_Breg}
    \end{align}
    and \eqref{eqn:g_dot}. 
    Further, the corresponding continuous flow locally converges to the minimizer $g^*$ of $\obj$ with the rate given by
    \begin{align}
        \obj(g(t)) - \obj(g^*) \in \mathcal{O}(t^{-p}),
    \end{align}
    when $\obj$ is geodesically convex. 
\end{prop}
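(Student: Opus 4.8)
The plan is to treat the two assertions separately: equation~\eqref{eqn:EL_Breg} follows from a direct substitution into \eqref{eqn:EL}, while the convergence rate follows from a Lyapunov argument adapted to the left-invariant Riemannian structure of $\G$. For the Euler--Lagrange equations, since $\|\xi\|^2 = \met{\xi,\xi} = \pair{\mathbf{J}(\xi),\xi}$ with $\mathbf{J}$ symmetric positive-definite, the fiber derivative of the Bregman Lagrangian \eqref{eqn:BL} is $\deriv{L}{\xi} = \frac{1}{p}t^{\lambda p+1}\mathbf{J}(\xi)$, which is invertible for $t>0$ as needed for \eqref{eqn:HE}; also $\D_g L = -Cp\,t^{(\lambda+1)p-1}\D_g\obj(g)$, so $\T^*_e\L_g(\D_g L) = -Cp\,t^{(\lambda+1)p-1}\nabla_\L\obj(g)$ by \eqref{eqn:grad}. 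Substituting into \eqref{eqn:EL}, expanding $\frac{d}{dt}\bigl(\frac{1}{p}t^{\lambda p+1}\mathbf{J}(\xi)\bigr)$ by the product rule, and dividing through by $\frac{1}{p}t^{\lambda p+1}$ yields \eqref{eqn:EL_Breg}, the gradient coefficient being $Cp^2 t^{(\lambda+1)p-1-(\lambda p+1)} = Cp^2 t^{p-2}$.

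For the rate, I would first recast \eqref{eqn:EL_Breg} in intrinsic form. Regard $\G$ as a Riemannian manifold with the left-invariant metric $\met{\cdot,\cdot}$ and Levi-Civita connection. The homogeneous part $\frac{d}{dt}\mathbf{J}(\xi) - \ad^*_\xi\mathbf{J}(\xi) = 0$ of \eqref{eqn:EL_Breg} is the Euler--Arnold (geodesic) equation of this metric, so $\mathbf{J}^{-1}\bigl(\frac{d}{dt}\mathbf{J}(\xi) - \ad^*_\xi\mathbf{J}(\xi)\bigr)$ represents the covariant acceleration $\frac{D}{dt}\dot g$, while $\mathbf{J}^{-1}\nabla_\L\obj = \mathrm{grad}\,\obj$ is the Riemannian gradient. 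Applying $\mathbf{J}^{-1}$ to \eqref{eqn:EL_Breg} thus shows it is equivalent to the $p$-Bregman flow
\[ \frac{D}{dt}\dot g + \frac{\lambda p+1}{t}\,\dot g + Cp^2 t^{p-2}\,\mathrm{grad}_g\obj = 0, \]
as studied in~\cite{duruisseaux2021variational,alimisis2020continuous}.

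Next, fix a geodesically convex neighbourhood $\mathcal{U}$ of a local minimiser $g^*$ on which the logarithm map and the squared-distance function $d(\cdot,g^*)^2$ are smooth; ``locally converges'' refers to trajectories that stay in $\mathcal{U}$. Following the Euclidean template of~\cite{wibisono2016variational}, introduce a Lyapunov function $\mathcal{E}(t) = \mathcal{D}(t) + C\,t^p\bigl(\obj(g(t)) - \obj(g^*)\bigr)$, where $\mathcal{D}(t)\ge 0$ is the Riemannian analogue of $\frac{1}{2}\bigl\| x^* - x - \frac{t}{p}\dot x\bigr\|^2$, assembled from $d(g(t),g^*)$ and $\frac{t}{p}\dot g$. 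Differentiating $\mathcal{E}$ along the flow and using the reduced equation to eliminate $\frac{D}{dt}\dot g$, the velocity--gradient cross terms cancel exactly as in the flat case, and the surviving first-order contribution is a positive multiple of $\bigl(\obj(g)-\obj(g^*)\bigr) + \met{\mathrm{grad}_g\obj,\,\mathrm{Log}_g(g^*)}$, which is $\le 0$ by geodesic convexity of $\obj$. Hence $\dot{\mathcal{E}}(t)\le 0$, so $C\,t^p\bigl(\obj(g(t)) - \obj(g^*)\bigr)\le \mathcal{E}(t_0)$, i.e.\ $\obj(g(t)) - \obj(g^*)\in\mathcal{O}(t^{-p})$.

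The main obstacle is the curvature bookkeeping, the one genuine difference from the $\Re^n$ case: differentiating $\mathcal{D}(t)$ produces extra terms governed by the Hessian of $d(\cdot,g^*)^2$, which is not the identity on a curved manifold. Controlling these via Hessian comparison estimates in terms of sectional-curvature and diameter bounds on $\mathcal{U}$ is precisely what forces the parameter $\lambda\ge 1$ and the restriction to a neighbourhood of $g^*$; everything else mirrors the Euclidean Bregman analysis.
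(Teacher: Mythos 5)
Your derivation of \eqref{eqn:EL_Breg} is exactly the paper's: compute $\deriv{L}{\xi} = \frac{t^{\lambda p+1}}{p}\mathbf{J}(\xi)$, substitute into \eqref{eqn:EL} using \eqref{eqn:grad}, and divide by $\frac{t^{\lambda p+1}}{p}$; your exponent bookkeeping $(\lambda+1)p-1-(\lambda p+1)=p-2$ is correct. Where you diverge is the convergence claim: the paper does not prove it at all, but simply invokes Theorem~3.2 of~\cite{duruisseaux2021variational}, whereas you sketch the underlying argument --- rewriting \eqref{eqn:EL_Breg} via the Euler--Arnold identification of $\mathbf{J}^{-1}\bigl(\tfrac{d}{dt}\mathbf{J}(\xi)-\ad^*_\xi\mathbf{J}(\xi)\bigr)$ with the covariant acceleration for the left-invariant metric, and then running the Bregman--Lyapunov estimate of~\cite{wibisono2016variational} on the resulting Riemannian flow. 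That identification and the overall Lyapunov strategy are correct and match what the cited theorem actually does, and you rightly locate where $\lambda\geq 1$ and the word ``locally'' enter (Hessian comparison for $d(\cdot,g^*)^2$ on a geodesically convex neighbourhood). The one caveat is that your sketch stops exactly at the hard step: $\mathcal{D}(t)$ is never pinned down, and the claim that the cross terms ``cancel exactly as in the flat case'' is precisely what the curvature estimates must be invoked to salvage; as written this is an outline of the cited proof rather than a self-contained one. Since the paper itself delegates this step to the reference, that is not a defect relative to the paper --- but if you intend the Lyapunov argument to stand on its own, you need to either carry out the Hessian comparison explicitly or cite~\cite{duruisseaux2021variational,alimisis2020continuous} for it, as the paper does.
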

\begin{proof}
    We have
    \begin{align*}
        \deriv{L}{\xi} & = \frac{t^{\lambda p+1}}{p}\mathbf{J}(\xi)
    \end{align*}
    Substituting this into \eqref{eqn:EL} and using \eqref{eqn:grad}, 
    \begin{gather*}
        \frac{t^{\lambda p+1}}{p}\frac{d \mathbf{J}(\xi)}{dt} + \frac{(\lambda p+1)t^{\lambda p}}{p}\mathbf{J}(\xi) -\frac{t^{\lambda p+1}}{p} \ad^*_\xi \mathbf{J}(\xi) \\
        + Cpt^{(\lambda +1)p-1} \nabla_\L \obj(g) =0.
    \end{gather*}
    Dividing both sides by $\frac{t^{\lambda p+1}}{p}$ yields \eqref{eqn:EL_Breg}.
    The convergence property is established by~\cite[Theorem 3.2]{duruisseaux2021variational}.
\end{proof}

Therefore, the optimization problem on $\G$ can be addressed by numerically integrating \eqref{eqn:EL_Breg} from an initial guess. 
However, it has been observed that a na\"\i ve discretization is not able to match the polynomial convergence rate established in~\cite{wibisono2016variational}.
Further, we need a guarantee that the discrete trajectory evolves on the Lie group.

These two challenges can be addressed by applying a Lie group variational integrator, 
as their structure-preserving properties provides long-term numerical stability, and preservation of the group structure.
In the subsequent section, we derive Lie group variational integrators for the Bregman Lagrangian system. 

\subsection{Lie Group Variational Integrator for Bregman Dynamics}

Let $h_k = t_{k+1}- t_k$ and $t_{k,k+1} = t_k + h_k/2$.  
We consider the following form of the discrete Lagrangian
\begin{align}
    L_d(t_k, t_{k+1}, g_k, f_k ) & = \frac{\phi(t_{k,k+1}) }{h_k} T_d(f_k) -\frac{h_k}{2} \theta(t_k) \obj (g_k)\nonumber \\
                                 & \quad -\frac{h_k}{2} \theta(t_{k+1}) \obj (g_kf_k), \label{eqn:Ld}
\end{align}
where $T_d(f_k):\G\rightarrow\Re$ is chosen such that it approximates $T(f_k) \approx h_k^2 \|\xi_k \|^2/2$, and $\phi,\theta:\Re\rightarrow\Re$ are 
\begin{align}
    \phi(t) & = \frac{t^{\lambda p +1}}{p},\\
    \theta(t) & = C p t^{(\lambda+1)p-1}.
\end{align}
The corresponding variational integrators are presented as follows.
\begin{prop}\label{prop:DEL_Breg}
    The discrete-time Euler--Lagrange equations, or the Lie group variational integrator for the discrete Lagrangian \eqref{eqn:Ld} corresponding to the Bregman Lagrangian~\eqref{eqn:BL} are given by
\begin{align}
    \mu_k & =  \frac{\phi_{k,k+1}}{h_k}\Ad^*_{f_k^{-1}} (\T^*_e\L_{f_k}(\D_{f_k} T_{d_k})) + \frac{h_k\theta_k}{2} \nabla_L \obj_k, \label{eqn:muk_Breg}\\
    \mu_{k+1} & = \Ad^*_{f_k}( \mu_k - \frac{h_k\theta_k}{2} \nabla_L\obj_k ) -\frac{h_k\theta_{k+1}}{2} \nabla_\L \obj_{k+1}, \label{eqn:mukp_Breg}\\
    E_{k} & = \frac{\phi'_{k,k+1}}{ 2 h_k} T_{d_k} -\frac{h_k\theta'_{k}}{2} \obj_{k} \nonumber\\\
          & \quad + \frac{\phi_{k,k+1}}{h_k^2}T_{d_k} + \frac{\theta_k}{2}\obj_k + \frac{\theta_{k+1}}{2}\obj_{k+1}, \label{eqn:Ek_Breg}\\
    E_{k+1} & = -\frac{\phi'_{k,k+1}}{ 2 h_k} T_{d_k} +\frac{h_k\theta'_{k+1}}{2} \obj_{k+1}\nonumber \\\
            & \quad + \frac{\phi_{k,k+1}}{h_k^2}T_{d_k} +\frac{\theta_k}{2}\obj_k + \frac{\theta_{k+1}}{2}\obj_{k+1}, \label{eqn:Ekp_Breg}
\end{align}
together with \eqref{eqn:gkp}.
\end{prop}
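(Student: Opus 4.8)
The plan is to specialize the general extended discrete Legendre transforms \eqref{eqn:muk}, \eqref{eqn:mukp}, \eqref{eqn:Ep}, \eqref{eqn:Em} to the concrete discrete Lagrangian \eqref{eqn:Ld}, which reduces the claim to a bookkeeping exercise in differentiation. First I would compute the left-trivialized spatial derivatives of \eqref{eqn:Ld} term by term. The kinetic term $\tfrac{\phi(t_{k,k+1})}{h_k}T_d(f_k)$ contributes only through $\D_{f_k}$, giving $\tfrac{\phi_{k,k+1}}{h_k}\T^*_e\L_{f_k}(\D_{f_k}T_{d_k})$; the term $-\tfrac{h_k}{2}\theta(t_k)\obj(g_k)$ contributes only through $\D_{g_k}$, giving $-\tfrac{h_k\theta_k}{2}\nabla_\L\obj_k$ after applying $\T^*_e\L_{g_k}$ and invoking the definition \eqref{eqn:grad}. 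The coupling term $-\tfrac{h_k}{2}\theta(t_{k+1})\obj(g_kf_k)$ requires the one genuinely geometric identity: perturbing $g_k\mapsto g_k\exp(\varepsilon\eta)$ sends $g_kf_k\mapsto g_{k+1}\exp(\varepsilon\Ad_{f_k^{-1}}\eta)$, so that $\T^*_e\L_{g_k}(\D_{g_k}[\obj(g_kf_k)]) = \Ad^*_{f_k^{-1}}\nabla_\L\obj_{k+1}$, whereas perturbing $f_k\mapsto f_k\exp(\varepsilon\zeta)$ sends $g_kf_k\mapsto g_{k+1}\exp(\varepsilon\zeta)$, so that $\T^*_e\L_{f_k}(\D_{f_k}[\obj(g_kf_k)]) = \nabla_\L\obj_{k+1}$.

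Substituting these into \eqref{eqn:muk} yields \eqref{eqn:muk_Breg} directly, since the $\Ad^*_{f_k^{-1}}\nabla_\L\obj_{k+1}$ contributions coming from $-\T^*_e\L_{g_k}(\D_{g_k}L_{d_k})$ and from $\Ad^*_{f_k^{-1}}(\T^*_e\L_{f_k}(\D_{f_k}L_{d_k}))$ cancel. For \eqref{eqn:mukp_Breg} I would start from $\mu_{k+1} = \T^*_e\L_{f_k}(\D_{f_k}L_{d_k}) = \tfrac{\phi_{k,k+1}}{h_k}\T^*_e\L_{f_k}(\D_{f_k}T_{d_k}) - \tfrac{h_k\theta_{k+1}}{2}\nabla_\L\obj_{k+1}$ from \eqref{eqn:mukp}, and then eliminate the kinetic piece by solving \eqref{eqn:muk_Breg} for it, namely $\tfrac{\phi_{k,k+1}}{h_k}\T^*_e\L_{f_k}(\D_{f_k}T_{d_k}) = \Ad^*_{f_k}\!\big(\mu_k - \tfrac{h_k\theta_k}{2}\nabla_\L\obj_k\big)$, which is exactly the substitution that produces \eqref{eqn:mukp_Breg}.

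For the discrete energies I would differentiate \eqref{eqn:Ld} in $t_k$ and $t_{k+1}$, using $h_k = t_{k+1}-t_k$ and $t_{k,k+1} = \tfrac12(t_k+t_{k+1})$, so that $\partial t_{k,k+1}/\partial t_k = \partial t_{k,k+1}/\partial t_{k+1} = \tfrac12$, $\partial h_k/\partial t_k = -1$, $\partial h_k/\partial t_{k+1} = +1$, and hence $\partial(1/h_k)/\partial t_k = +1/h_k^2$, $\partial(1/h_k)/\partial t_{k+1} = -1/h_k^2$. A term-by-term application of the product rule together with the sign conventions $E_k = \D_{t_k}L_{d_k}$ and $E_{k+1} = -\D_{t_{k+1}}L_{d_k}$ from \eqref{eqn:Em} and \eqref{eqn:Ep} then gives \eqref{eqn:Ek_Breg} and \eqref{eqn:Ekp_Breg}; the only point requiring care is collecting the coefficient of $T_{d_k}$, which splits into a $\phi'_{k,k+1}/(2h_k)$ piece and a $\phi_{k,k+1}/h_k^2$ piece whose relative signs differ between $E_k$ and $E_{k+1}$. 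Everything else is routine differentiation together with the reindexing already justified in Proposition~1, so the main obstacle is simply keeping the conjugation by $f_k$ straight in the coupling-potential derivatives and in the resulting re-expression of the kinetic momentum via $\Ad^*_{f_k}$.
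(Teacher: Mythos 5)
Your proposal is correct and follows exactly the route the paper takes: the paper's proof is the single sentence that these follow by substituting \eqref{eqn:Ld} into \eqref{eqn:muk}, \eqref{eqn:mukp}, \eqref{eqn:Em}, and \eqref{eqn:Ep}, and you have simply carried out that substitution in full, including the key identity $\T^*_e\L_{g_k}(\D_{g_k}[\obj(g_kf_k)]) = \Ad^*_{f_k^{-1}}\nabla_\L\obj_{k+1}$ that produces the cancellation in \eqref{eqn:muk_Breg} and the elimination of the kinetic momentum via $\Ad^*_{f_k}$ in \eqref{eqn:mukp_Breg}. All the sign and chain-rule bookkeeping for the energies checks out against \eqref{eqn:Ek_Breg} and \eqref{eqn:Ekp_Breg}.
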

\begin{proof}
    These can be derived by substituting \eqref{eqn:Ld} into \eqref{eqn:muk}, \eqref{eqn:mukp}, \eqref{eqn:Em}, and \eqref{eqn:Ep}, respectively. 
\end{proof}
As discussed at the end of Section III, these provide symplectic and momentum-preserving discrete time flow maps.
Since these corresponds to a discretization of the Bregman Lagrangian system, they can be considered as a geometric numerical integrator for \eqref{eqn:EL_Breg}, or utilized as an optimization algorithm on $\G$.
If $T_d(f_k)=T_d(f_k^{-1})$, then the discrete Lagrangian is self-adjoint, and the above integrator is symmetric and therefore at least second-order accurate.

\section{Optimization on $\G$}

In this section, we present both of the continuous Bregman Lagrangian system and the Lie group variational integrator for several Lie groups.

\subsection{Euclidean Space $\Re^n$}

Suppose $\G=\Re^n$, with the additive group action, and the inner product is chosen to be $\pair{x,y}=x^Ty$ for any $x,y\in\Re^n$.
Let $\mathbf{J}(\dot x) = I_{n\times n} \dot x$, and $\lambda =1$. 

From \eqref{eqn:EL_Breg}, the continuous Euler--Lagrange equation is given by
\begin{align}
    \ddot x + \frac{p+1}{t} \dot x + Cp^2 t^{p-2} \nabla \obj(x) = 0,\label{eqn:EL_Re}
\end{align}
which recovers the differential equation derived in \cite{wibisono2016variational}.

Next, we develop variational integrators. 
The discrete kinematics equation \eqref{eqn:gkp} is rewritten as $x_{k+1} = x_k + \Delta x_k$ for $\Delta x_k\in\Re^n$.
The kinetic energy term in \eqref{eqn:Ld} is chosen as
\begin{align}
    T_d =\frac{1}{2}\|\Delta x_k\|^2.\label{eqn:Td_Re}
\end{align}
According to \Cref{prop:DEL_Breg}, we obtain the discrete Euler--Lagrange equations as follows.

\begin{prop}
    When $G=\Re^n$, the variational integrator for the discrete Bregman Lagrangian \eqref{eqn:Ld} is given by 
\begin{align}
    v_k & =  \frac{\phi_{k,k+1}}{h_k}\Delta x_k + \frac{h_k\theta_k}{2} \nabla \obj_k,\label{eqn:muk_Re} \\
    v_{k+1} & = v_k - \frac{h_k\theta_k}{2} \nabla\obj_k  -\frac{h_k\theta_{k+1}}{2} \nabla \obj_{k+1},  \label{eqn:mukp_Re}
\end{align}
and \eqref{eqn:Ek_Breg}, \eqref{eqn:Ekp_Breg} with \eqref{eqn:Td_Re}.
\end{prop}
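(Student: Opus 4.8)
The plan is to specialize \Cref{prop:DEL_Breg} to the additive group $\G=\Re^n$, for which all of the non-abelian structure collapses and the abstract update equations reduce to the stated vector-space recursion. First I would record the relevant simplifications: the group multiplication is $g_kf_k = x_k+\Delta x_k$, so the discrete kinematics \eqref{eqn:gkp} reads $x_{k+1}=x_k+\Delta x_k$; the Lie algebra is $\g=\Re^n$ with $\ad_\xi=0$; the adjoint representation is trivial, $\Ad_g=\mathrm{id}$ and hence $\Ad^*_{f_k^{-1}}=\Ad^*_{f_k}=\mathrm{id}$; and left translation $\L_g$ is the shift $x\mapsto x+g$, whose cotangent lift $\T^*_e\L_g$ is the identity on $\Re^n$ under the standard identifications $\g\simeq\g^*\simeq\Re^n$ induced by $\mathbf{J}=I_{n\times n}$.

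Next I would evaluate the left-trivialized derivatives appearing in \eqref{eqn:muk_Breg}--\eqref{eqn:mukp_Breg} for the present data. With $T_d=\tfrac12\|\Delta x_k\|^2$ from \eqref{eqn:Td_Re}, one has $\D_{f_k}T_{d_k}=\Delta x_k$, and since the cotangent lift of left translation is the identity, $\T^*_e\L_{f_k}(\D_{f_k}T_{d_k})=\Delta x_k$. Likewise $\nabla_\L\obj_k=\T^*_e\L_{g_k}(\D_{g_k}\obj)=\nabla\obj_k$ is the ordinary Euclidean gradient, by \eqref{eqn:grad}. Writing $\mu_k\equiv v_k$ and substituting these into \eqref{eqn:muk_Breg} with $\Ad^*_{f_k^{-1}}$ replaced by the identity gives exactly \eqref{eqn:muk_Re}; substituting into \eqref{eqn:mukp_Breg} and using $\Ad^*_{f_k}=\mathrm{id}$ yields $v_{k+1}=\bigl(v_k-\tfrac{h_k\theta_k}{2}\nabla\obj_k\bigr)-\tfrac{h_k\theta_{k+1}}{2}\nabla\obj_{k+1}$, which is \eqref{eqn:mukp_Re}.

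Finally, the discrete energy relations \eqref{eqn:Ek_Breg} and \eqref{eqn:Ekp_Breg} carry over verbatim from \Cref{prop:DEL_Breg}: they are already expressed purely in terms of the scalars $T_{d_k}$, $\obj_k$, $\obj_{k+1}$, $h_k$ and the functions $\phi,\theta$ and their derivatives, none of which involves the group structure, so one only substitutes $T_{d_k}=\tfrac12\|\Delta x_k\|^2$. I do not anticipate a genuine analytical obstacle; the only point needing mild care is the consistent bookkeeping of the identifications $\g\simeq\g^*\simeq\Re^n$ together with the verification that on an abelian group $\T^*_e\L_g$, $\Ad_g$, and $\Ad^*_g$ all reduce to the identity while $\ad_\xi$ vanishes, so that the equations of \Cref{prop:DEL_Breg} collapse cleanly to the claimed recursion.
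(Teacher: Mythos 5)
Your proposal is correct and matches the paper's approach: the paper simply invokes \Cref{prop:DEL_Breg} and leaves the specialization to $\Re^n$ implicit, while you carry out the same reduction explicitly (trivial $\Ad$, $\ad$, and cotangent lifts of translation; $\D_{f_k}T_{d_k}=\Delta x_k$). The details you supply are exactly the ones needed.
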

These are implicit as \eqref{eqn:muk_Re} and \eqref{eqn:Ek_Breg} should be solved together for $\Delta x_k$ and $h_k$.
One straightforward approach is fixed-point iteration.
For a given $h_k$, \eqref{eqn:muk_Re} can be solved explicitly for $\Delta_k$, which yields $x_{k+1}$. 
Then, \eqref{eqn:Ek_Breg} can be solved for $h_k$.
These procedure are iterated until $h_k$ converges. 

\subsection{Three-Dimensional Special Orthogonal Group $\SO$}

Next, consider $\SO=\{R\in\Re^{3\times 3}\,|\, R^T R = I_{3\times 3},\, \mathrm{det}(R)]=1\}$.
Its Lie algebra is $\so=\{S\in\Re^{3\times 3}\,|\, S^T=-S\}$ with the matrix commutator as the Lie bracket. 
This is identified with $\Re^3$ through the \textit{hat} map $\hat\cdot :\Re^3\rightarrow\so$ defined such that $\hat x\in\so$ and $\hat x y = x\times y$ for any $x,y\in\Re^3$.
The inverse of the hat map is denoted by the \textit{vee} map $\vee: \so\rightarrow\Re^3$.
The inner product is given by
\begin{align*}
    \langle \hat \eta, \hat \xi \rangle_{\so} = \frac{1}{2}\tr{\hat \eta^T\hat \xi} = \eta^T\xi = \pair{\eta,\xi}_{\Re^3}.
\end{align*}
The metric is chosen as
\begin{align}
    \langle \mathbf{J}(\hat \eta), \hat \xi \rangle_{\so} = \tr{\hat \eta^T J_d \hat \xi} = \eta^TJ \xi = \pair{J\eta,\xi}_{\Re^3},\label{eqn:met_SO3}
\end{align}
where $J\in\Re^{3\times 3}$ is a symmetric, positive-definite matrix, and $J_d=\frac{1}{2}\tr{J}I_{3\times 3}-J\in\Re^{3\times 3}$.
Further, 
\begin{alignat*}{2}
    \ad_\eta\xi &= \eta\times \xi,& \quad \ad^*_\eta \xi &= \xi\times \eta,\\
    \Ad_F\eta &= F\eta,& \quad \Ad^*_F \eta &= F^T\eta.
\end{alignat*}

Consider
\begin{align*}
    L(t,R,\Omega) = \frac{t^{p+1}}{2p} \Omega\cdot J\Omega - Cpt^{2p-1} \obj(R).
\end{align*}
From \eqref{eqn:EL_Breg}, the Euler--Lagrange equations are given by
\begin{gather}
    J\dot\Omega + \frac{p+1}{t} J\Omega + \hat\Omega J\Omega + C p^2 t^{p-2} \nabla_\L \obj(R) = 0,\label{eqn:EL_SO3}\\
    \dot R = R\hat\Omega. \label{eqn:R_dot}
\end{gather}

Next, we derive variational integrators. 
The kinematics equation is written as
\begin{align}
    R_{k+1} = R_k F_k, \label{eqn:Rkp}
\end{align}
for $F_k\in\SO$.
Similar with~\cite{LeeLeoCMAME07}, the angular velocity is approximated with $\hat\Omega_k \approx \frac{1}{h_k} R_k^T (R_{k+1}-R_k) = \frac{1}{h_k} (F_k-I_{3\times 3})$.
Substituting this into \eqref{eqn:met_SO3},
\begin{align}
    T_d(F_k) & =\tr{(I_{3\times 3}-F_k)J_d}, \label{eqn:Td_SO3}
\end{align}
which satisfies $T_d(F_k)=T_d(F_k^T)$.

\begin{prop}\label{prop:DEL_Breg_SO3}
    When $\G=\SO$, the Lie group variational integrator for the discrete Bregman Lagrangian \eqref{eqn:Ld} with \eqref{eqn:Td_SO3} is given by
\begin{align}
    \mu_k & =\frac{\phi_{k,k+1}}{h_k } (F_k J_d - J_dF_k^T)^\vee + \frac{h_k\theta_k }{2} \nabla_\L \obj_k, \label{eqn:muk_SO3} \\
    \mu_{k+1}  & = F_k^T \mu_k -\frac{h_k\theta_k}{2}\nabla_\L \obj_k - \frac{h_k\theta_k}{2} \nabla_\L \obj_{k+1}, \label{eqn:mukp_SO3}
\end{align}
together with \eqref{eqn:Ekp_Breg}, \eqref{eqn:Rkp}, \eqref{eqn:Ek_Breg},  and \eqref{eqn:Td_SO3}.
\end{prop}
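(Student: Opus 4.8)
The plan is to specialize the general Lie group variational integrator of \Cref{prop:DEL_Breg} to $\G=\SO$ by substituting the concrete expressions for the adjoint operators, the left-trivialized derivative, and the discrete kinetic energy term \eqref{eqn:Td_SO3}. Everything hinges on computing the single quantity $\T^*_e\L_{F_k}(\D_{F_k} T_{d_k})$ appearing in \eqref{eqn:muk_Breg} and \eqref{eqn:mukp_Breg}, since the remaining terms involving $\obj$ and the energies carry over verbatim once we note that $\theta_k = \theta(t_k) = Cpt_k^{2p-1}$ already has the right form for $\SO$ and that $\Ad^*_{F_k} = F_k^T$.

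First I would differentiate $T_d(F_k) = \tr{(I_{3\times 3}-F_k)J_d}$ along a left-trivialized variation $\delta F_k = F_k\hat\eta$ for $\eta\in\Re^3$. This gives $\D_{F_k}T_{d_k}\cdot(F_k\hat\eta) = -\tr{F_k\hat\eta J_d}$, and using the standard trace identity $\tr{\hat\eta A} = -\eta^T(A - A^T)^\vee$ (valid for any $A\in\Re^{3\times 3}$) together with $J_d = J_d^T$, one obtains $\T^*_e\L_{F_k}(\D_{F_k}T_{d_k}) = (F_kJ_d - J_dF_k^T)^\vee$, matching the bracketed factor in \eqref{eqn:muk_SO3}. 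Substituting this into \eqref{eqn:muk_Breg}, using $\Ad^*_{F_k^{-1}} = (F_k^{-1})^T\,\cdot = F_k\,\cdot$ — wait, more carefully $\Ad^*_{F}\eta = F^T\eta$ so $\Ad^*_{F_k^{-1}}\eta = (F_k^{-1})^T\eta = F_k\eta$; but since the trace expression $(F_kJ_d - J_dF_k^T)^\vee$ is already the left-trivialized momentum evaluated at the identity, applying $\Ad^*_{F_k^{-1}}$ transports it appropriately — I would check that the left-trivialization conventions in \eqref{eqn:muk_Breg} reduce, after this substitution, to exactly \eqref{eqn:muk_SO3}. For \eqref{eqn:mukp_SO3}, substitute into \eqref{eqn:mukp_Breg} and use $\Ad^*_{F_k}\mu_k = F_k^T\mu_k$, together with the fact that for $\SO$ the discrete Lagrangian uses $\theta(t_{k+1})$ but in the listed equation the coefficient is written $\frac{h_k\theta_k}{2}$ on the second gradient term; I would verify whether this is a typo for $\theta_{k+1}$ or whether a simplification specific to this section is intended, and reconcile accordingly.

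For the energy equations \eqref{eqn:Ek_Breg} and \eqref{eqn:Ekp_Breg}, no new computation is needed: the proposition merely asserts they hold together with the $\SO$-specific kinematics \eqref{eqn:Rkp} and kinetic term \eqref{eqn:Td_SO3}. So the proof reduces to (i) the momentum derivation above, and (ii) observing that $\D_{t_k}L_{d_k}$ and $\D_{t_{k+1}}L_{d_k}$ depend on the Lagrangian only through $\phi$, $\theta$, $T_{d_k}$, and $\obj$ in a manner already captured by \eqref{eqn:Ek_Breg}–\eqref{eqn:Ekp_Breg}, so these carry over unchanged. I would also remark that since $T_d(F_k) = T_d(F_k^T)$ by \eqref{eqn:Td_SO3}, the discrete Lagrangian is self-adjoint and hence the scheme is symmetric and at least second-order, consistent with the closing remark after \Cref{prop:DEL_Breg}.

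The main obstacle I anticipate is bookkeeping with the left-trivialization and $\Ad^*$ conventions — specifically making sure the $\Ad^*_{F_k^{-1}}$ in \eqref{eqn:muk_Breg} composed with the identity-based trace expression yields precisely the asymmetric combination $(F_kJ_d - J_dF_k^T)^\vee$ rather than its $F_k$-conjugate, and similarly tracking whether the gradient coefficients in \eqref{eqn:mukp_SO3} should read $\theta_{k+1}$ or $\theta_k$. A secondary check is the sign in the trace identity $\tr{\hat\eta A} = -\eta^T(A-A^T)^\vee$, which must be applied consistently; a quick test on $A = e_i e_j^T$ confirms it. None of this is deep — it is a direct substitution — so the write-up can be kept to the one-line proof already given, possibly expanded by a sentence recording the key trace identity.
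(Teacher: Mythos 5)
Your proposal is correct and follows essentially the same route as the paper: differentiate $T_d(F_k)=\tr{(I_{3\times 3}-F_k)J_d}$ along $\delta F_k = F_k\hat\chi_k$, apply the trace identity $\mathrm{tr}[-\hat x A]=x\cdot(A-A^T)^\vee$, and substitute into \eqref{eqn:muk_Breg}--\eqref{eqn:mukp_Breg}. The one point you left unresolved works out as follows: the pre-adjoint quantity is $\T^*_e\L_{F_k}(\D_{F_k}T_{d_k})=(J_dF_k-F_k^TJ_d)^\vee$ (not yet the bracketed factor in \eqref{eqn:muk_SO3}), and then $\Ad^*_{F_k^{-1}}(J_dF_k-F_k^TJ_d)^\vee = F_k(J_dF_k-F_k^TJ_d)^\vee = \bigl(F_k(J_dF_k-F_k^TJ_d)F_k^T\bigr)^\vee=(F_kJ_d-J_dF_k^T)^\vee$ by the identity $F\hat y F^T=\widehat{Fy}$ for $F\in\SO$, which gives exactly \eqref{eqn:muk_SO3}. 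Your suspicion about \eqref{eqn:mukp_SO3} is also justified: substituting into \eqref{eqn:mukp_Breg} yields $\mu_{k+1}=F_k^T\bigl(\mu_k-\tfrac{h_k\theta_k}{2}\nabla_\L\mathsf{f}_k\bigr)-\tfrac{h_k\theta_{k+1}}{2}\nabla_\L\mathsf{f}_{k+1}$, so the last coefficient should read $\theta_{k+1}$ and the $F_k^T$ should act on the $\nabla_\L\mathsf{f}_k$ term as well; these are typos in the stated proposition rather than gaps in your argument.
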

\begin{proof}
    Let $\delta F_k = F_k \hat\chi_k$.
    The derivative of \eqref{eqn:Td_SO3} is 
    \begin{align*}
        \D_{F_k} T_{d_k} \cdot \delta F_k = \tr{-F_k\hat\chi_k J_d} = (J_dF_k - F_k^T J_d)^\vee \cdot \chi,
    \end{align*}
    where the last equality is from the identity, $\mathrm{tr}[-\hat x A]= x\cdot (A-A^T)^\vee$ for any $x\in\Re^3$ and $A\in\Re^{3\times 3}$.
    Thus, $\T^*_I \L_{F_k} (\D_{F_k} T_{d_k}) = (J_dF_k - F_k^T J_d)^\vee $. 
    Substituting this into \eqref{eqn:muk_Breg} and \eqref{eqn:mukp_Breg} yields \eqref{eqn:muk_SO3} and \eqref{eqn:mukp_SO3}, respectively. 
\end{proof}
To implement these, \eqref{eqn:mukp_SO3} and \eqref{eqn:Ek_Breg} should be solved together for $h_k$ and $F_k$.
For a given $h_k$, computational approaches to solve \eqref{eqn:muk_SO3} for $F_k$ are presented in~\cite[Sec 3.3.8]{Lee08}.
When $J=I_{3\times 3}$, or equivalently when $J_d = \frac{1}{2}I_{3\times 3}$, \eqref{eqn:muk_SO3} can be solved explicitly to obtain
\begin{align}
    F_k = \exp \left(\frac{\sin^{-1}\|a\|}{\|a\|}\hat a\right),\label{eqn:Fk_SO3}
\end{align}
where $a = \frac{h_k}{\phi_{k,k+1}} (\mu_k - \frac{h_k\theta_k}{2}\nabla_\L \obj_k)\in\Re^3$.
This can replace \eqref{eqn:muk_SO3}.

\subsection{Product of $\Re^n$ and $\SO$}

Suppose $\G=\SO\times \Re^n$.
As it is the direct product of $\SO$ and $\Re^n$, the variation of the action sum is decomposed into two parts of $\SO$ and $\Re^n$. 
Therefore, the continuous Euler--Lagrange equations on $\SO\times\Re^n$ are given by \eqref{eqn:EL_Re} and \eqref{eqn:EL_SO3}, after replacing $\nabla \obj(x)$ of \eqref{eqn:EL_Re} with $\nabla_x \obj(R,x)$, and replacing $\nabla_\L f(R)$ of \eqref{eqn:EL_SO3} with $\T^*_I\L_R(\D_R\obj(R,x))$.

Similarly, the corresponding Lie group variational integrators are also given by \eqref{eqn:muk_Re}, \eqref{eqn:mukp_Re}, \eqref{eqn:muk_SO3}, and \eqref{eqn:mukp_SO3}, in addition to the energy equations \eqref{eqn:Ek_Breg} and \eqref{eqn:Ekp_Breg} with
\begin{align*}
    T_{d_k}(F_k,\Delta x_k) = \frac{1}{2}\|\Delta x_k\|^2 + \tr{(I_{3\times 3}-F_k)J_d}.
\end{align*}

\section{Numerical Examples}

\subsection{Optimization on $\SO$}

\label{sec:experimentA}

Consider the objective function given by
\begin{align}
    \obj (R) & = \frac{1}{2}\| A- R\|^2_{\mathcal{F}}  
    =\frac{1}{2}(\|A\|^2_{\mathcal{F}} + 3) - \tr{A^T R},\label{eqn:obj_SO3}
\end{align}
where $\|\cdot\|_{\mathcal{F}}$ denotes the Frobenius norm, and $A\in\Re^{3\times 3}$.
Optimization of the above function appears in the least-squares estimation of attitude, referred to as Wahba's problem~\cite{WahSR65}. 
Let the singular value decomposition of $A=USV^T$ for a diagonal $S\in\Re^{3\times 3}$ and $U,V\in\mathsf{O}(3)$.
The optimal attitude is explicitly given by $R^* =  U \mathrm{diag}[1,1,\mathrm{det}(UV)] V^T$.
The left-trivialized gradient is $\nabla_\L \obj(R)  = (A^T R - R^T A)^\vee$.

\subsubsection{Order of Convergence}

\begin{figure}
    \centerline{
        \subfigure[convergence with respect to $t$]{\includegraphics[width=0.8\columnwidth]{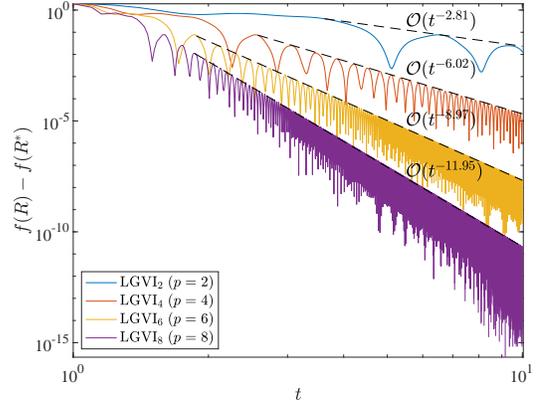}}
    }
    \centerline{
        \subfigure[convergence with respect to $k$]{\includegraphics[width=0.8\columnwidth]{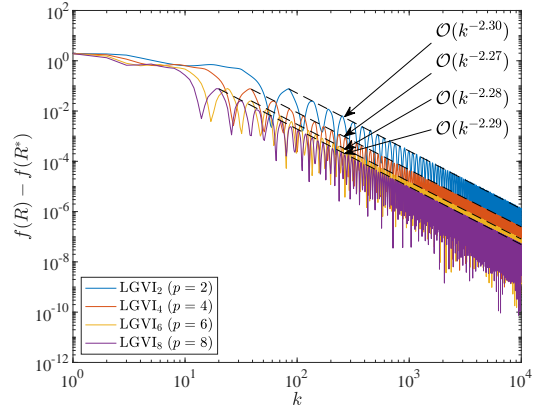}}
    }
    \caption{Convergence rate of LGVI in \Cref{prop:DEL_Breg_SO3} for varying $p$}\label{fig:conv}
\end{figure}

First, we check if the theoretical order of convergence guaranteed by \Cref{prop:EL_Breg} is achieved by the discrete Euler--Lagrange equations presented in \Cref{prop:DEL_Breg}.
The elements of the matrix $A$ in \eqref{eqn:obj_SO3} are randomly chosen from the uniform distribution on $[0,1]$.
The initial guess of $R_0$ is chosen such that the initial error is $0.9\pi$ in terms of the Euler-axis rotation. 
Lie group variational integrators (LGVI) in \Cref{prop:DEL_Breg_SO3} are simulated with fixed $J=I_{3\times 3}$, $C=1$, and $h_0 = 0.1$ for varying $p\in\{2,4,6,8\}$. 
Since $J=I_{3\times 3}$, \eqref{eqn:muk_SO3} is replaced by \eqref{eqn:Fk_SO3}.
The remaining implicit equation \eqref{eqn:Ek_Breg} is solved for $h_k$ via the Matlab equation solver, \texttt{lsqnonlin} with the tolerance of $10^{-4}$.
The initial guess for $h_k$ is provided by $h_{k-1}$. 

The resulting convergence rate represented by $\obj-\obj^*$ over $t_k$ is illustrated in \Cref{fig:conv}.(a), where the empirical convergence rate computed by manual fitting are also marked. 
It is shown that LGVI empirically achieved the order of convergence greater than the theoretical guarantee of $\mathcal{O}(t^{-p})$.
It has been reported that na\"\i ve discretizations of Bregman Lagrangian systems are not able to match the theoretical convergence rate, or it might cause numerical instability~\cite{wibisono2016variational,betancourt2018symplectic}.
These results suggest that LGVIs do not suffer from these discretization issues, and their performance are consistent with the continuous-time analysis. 

Next, given that the step size $h_k$ is adjusted adaptively according to \eqref{eqn:Ek_Breg} and \eqref{eqn:Ekp_Breg}, it is likely that numerical simulation with higher $p$ requires a smaller step size. 
In fact, the average step sizes are given by $6.15\times10^{-2}$, $6.50\times 10^{-3}$, $4.89\times10^{-4}$ and $1.21\times 10^{-5}$, respectively for $p\in\{2,4,6,8\}$. 
To examine the effects of the step size variations, the convergence with respect to the discrete time step is illustrated in \Cref{fig:conv}.(b).
It turns out that all of four cases of $p$ exhibit the similar order of long-term convergence, approximately $\mathcal{O}(k^{-2.3})$. This is not surprising, as Nesterov~\cite{nesterov2003introductory} showed that for every smooth first-order method, there exists a convex, $L$-smooth objective function, such that the rate of convergence is bounded from below by $\mathcal{O}(k^{-2})$, but it does not preclude the possibility of faster rates of convergence for strongly convex functions.

However, the case of higher $p$ benefits from faster initial convergence, and as a result, the terminal error for $p=4$ is more than 400 times smaller than that of $p=2$.

\subsubsection{Effects of Initial Step Size}

\begin{figure}
    \centerline{
        \subfigure[convergence with respect to $t$]{\includegraphics[width=0.8\columnwidth]{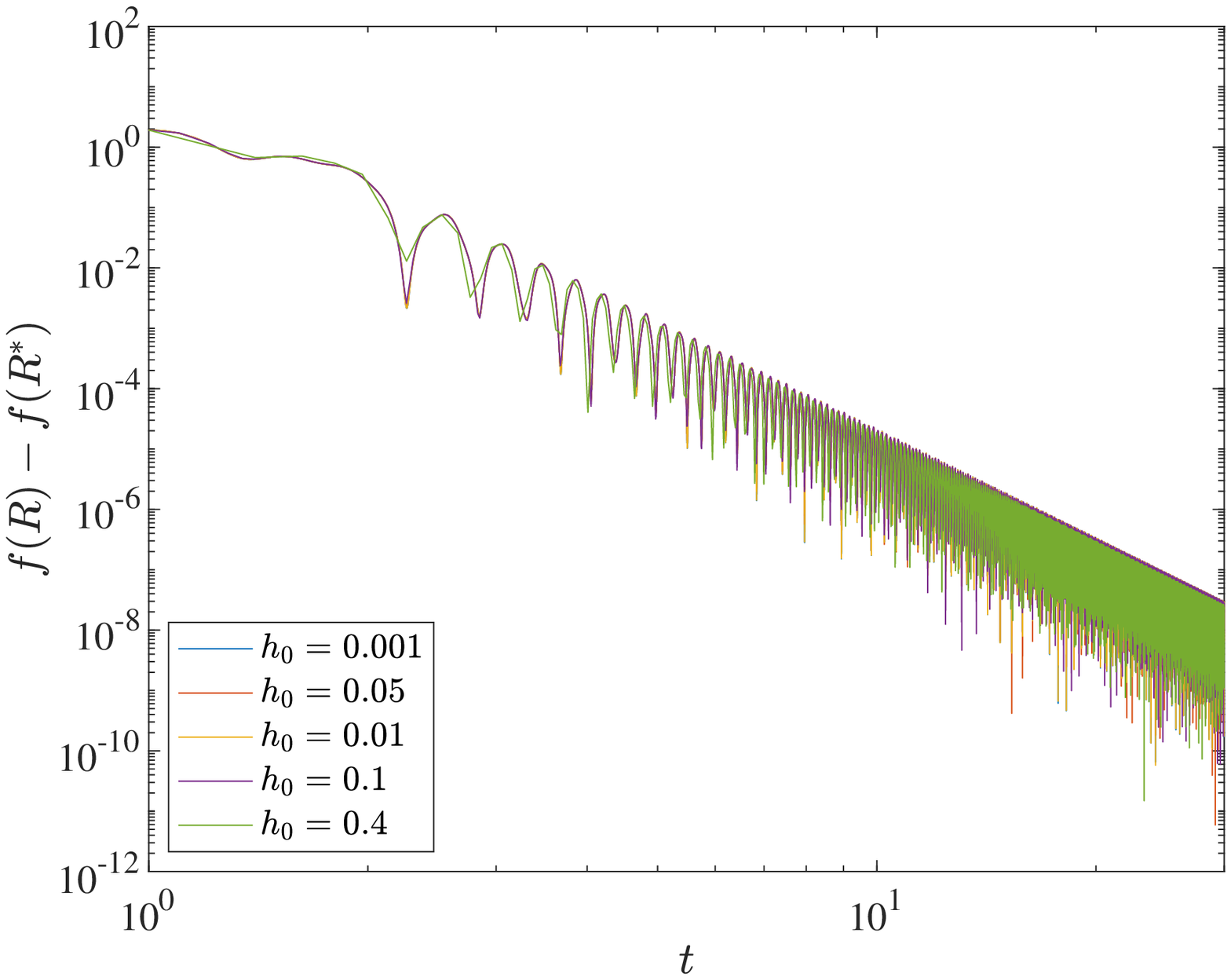}}
    }
    \centerline{
        \subfigure[evolution of step size $h_k$]{\includegraphics[width=0.8\columnwidth]{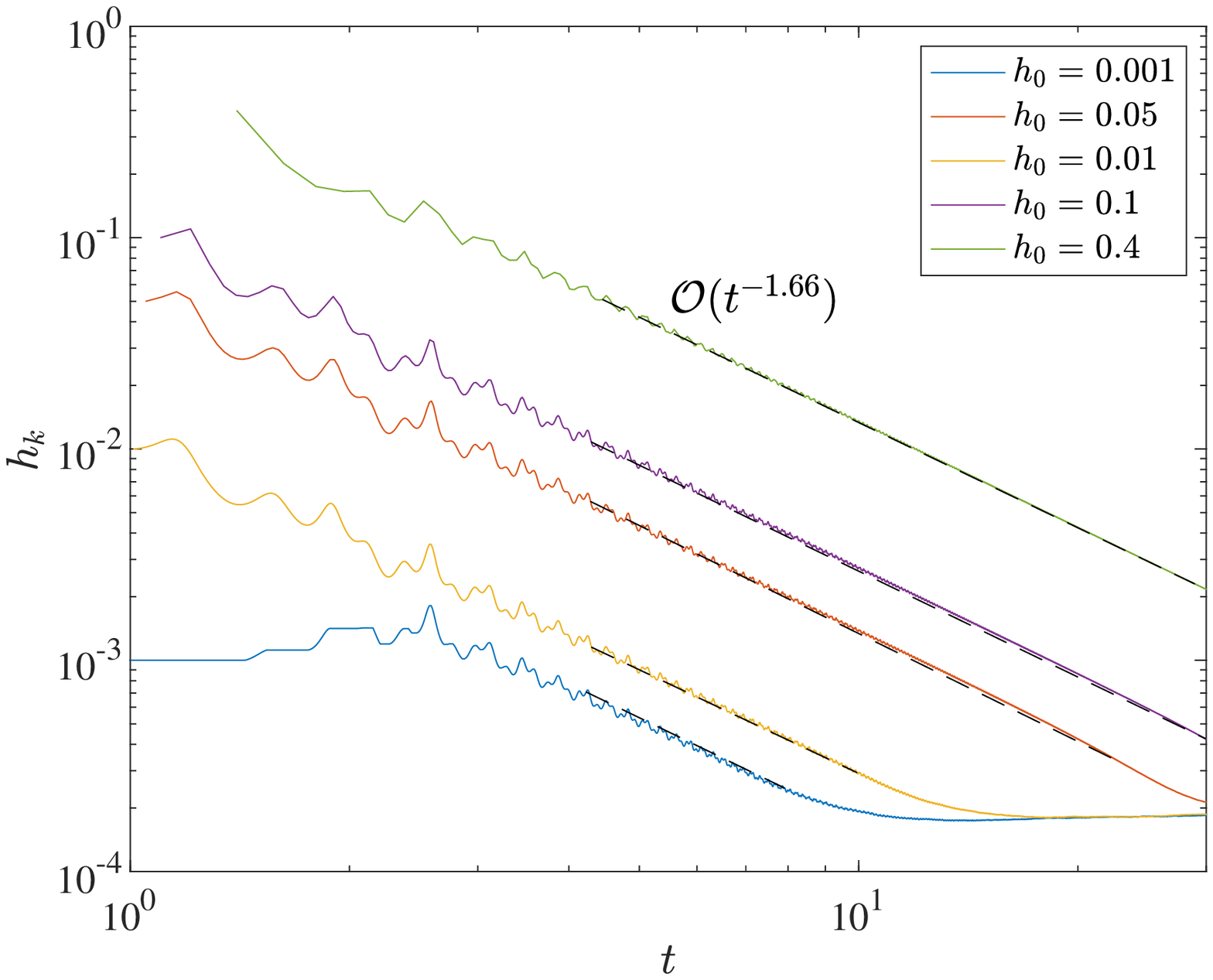}}
    }
    \caption{Convergence rate of LGVI in \Cref{prop:DEL_Breg_SO3} for varying $h_0$.} \label{fig:conv_h}
\end{figure}

As discussed at the end of \Cref{sec:Breg}, the extended LGVI requires choosing the initial step size $h_0$. 
Here, we study the effects of $h_0$ in the convergence.
More specifically, the order is fixed to $p=4$, and the initial step size is varied as $h_0\in\{0.001, 0.05, 0.01, 0.1, 0.4\}$.
The corresponding results are illustrated at \Cref{fig:conv_h}.
Interestingly, in \Cref{fig:conv_h}.(a), the convergence with respect to $t$ is not much affected by the initial step size $h_0$.
Next, \Cref{fig:conv_h}.(b) presents the time-evolution of the step size, and it is shown that the step size computed by \eqref{eqn:Ek_Breg} decreases at the approximate order of $\mathcal{O}(t^{-1.6})$  for all cases. 
This might have been caused by the fact that the forcing term in \eqref{eqn:EL_SO3} increases over time.
Another notable feature is that after a certain period, the step sizes tend to converge. 
More specifically, the step size initialized by $h_0=0.001$ converges to $1.8\times 10^{-4}$ when $t>10$, which is joined by the case of $h_0=0.005$ later. 
It is expected that the next case for $h_0=0.01$ would follow the similar trend if the simulation time is increased.
This implies a certain stability property of the extended LGVI in the step size.
Furthermore, observe that for the wide range of variations of step sizes presented in \Cref{fig:conv_h}.(b), the convergence in \Cref{fig:conv_h}.(a) is fairly consistent, which suggests that the LGVI is robust to the choice of the step size. 

\subsubsection{Comparison with Other Discretizations of Bregman Euler--Lagrange Equation}

\begin{figure}
    \centerline{
        \subfigure[convergence with respect to $t$]{\includegraphics[width=0.8\columnwidth]{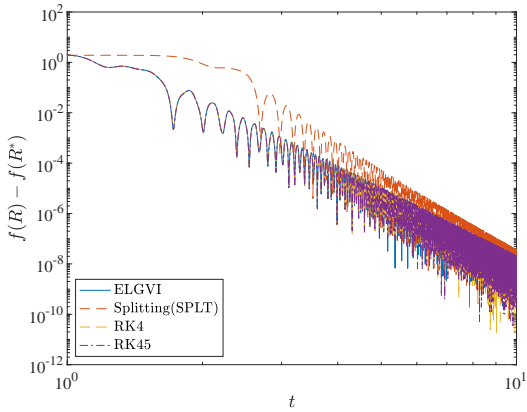}}
    }
    \centerline{
        \subfigure[orthogonality error of $R_k$]{\includegraphics[width=0.8\columnwidth]{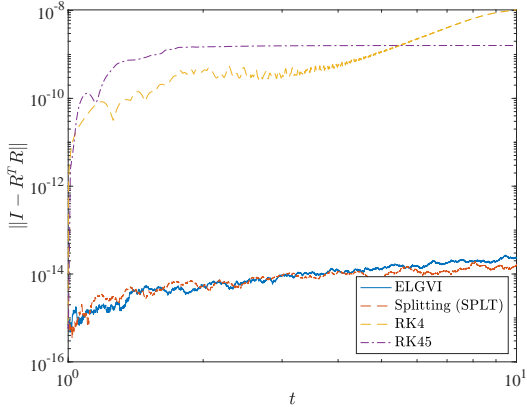}}
    }
    \caption{Comparison with other discretization schemes for Bregman Euler--Lagrange equation}\label{fig:comp_disc}
\end{figure}

Next, we compare LGVI with other discretization schemes applied to \eqref{eqn:EL_SO3} and \eqref{eqn:R_dot}.
Three methods are considered, namely the splitting approach introduced in \cite{tao2020variational} applied to the proposed continuous dynamics (abbreviated as SPLT), a 4-th order fixed-step Runge--Kutta method (RK4), and a variable stepsize Runge--Kutta method (RK45) implemented by the Matlab \texttt{ode45} function with the tolerance of $10^{-8}$. 
More precisely, the evolution of SPLT over step size $h$ is written as $\phi_{h/2} \circ \psi_h \circ \phi_{h/2}$, where $\phi_t$ is the exact flow map of \eqref{eqn:R_dot} with fixed $\Omega$, and $\psi_t$ is the exact $t$-time flow map of \eqref{eqn:EL_SO3} with fixed $R$ and $J=I_{3\times 3}$.

The goal of this comparison is not to claim that a certain method is superior to the other methods.
Rather, it is to identify the numerical properties of LGVI compared with others. 
Having stated that, LGVI is implicit, and \eqref{eqn:Ek_Breg} is solved by a general purpose nonlinear solver, instead of a numerical solver tailored for \eqref{eqn:Ek_Breg}.
As a consequence, LGVI is substantially slower than the three explicit methods, to the extent that the comparison is not meaningful. 

Instead, for a more interesting comparison, we exploit the property of LGVI providing consistent results for a wide range of step sizes, and we only utilize \eqref{eqn:muk_SO3} and \eqref{eqn:mukp_SO3} with a fixed prescribed step size. 
The resulting scheme, denoted by ELGVI, is explicit as shown in \eqref{eqn:Fk_SO3}.
Overall ELGVI is quite comparable with SPLT, but it benefits from a bit faster initial convergence, especially when $p$ is larger and $h$ is smaller. 
One particular case for $p=6$ and $h=0.001$ is illustrated in \Cref{fig:comp_disc}.(a).
With regard to RK4 and RK45, their convergence is almost identical to ELGVI, but as presented in \Cref{fig:comp_disc}.(b), those methods do not preserve the orthogonality of the rotation matrix, which is problematic.  
Whereas, both of LGVI and SPLT conserve the structure of rotation matrices. 
Next, the computation time with Intel Core i7 3.2GHz, averaged for 10 executions, are 0.0727, 0.0258, 0.3847, and 1.1476 seconds for ELGVI, SPLT, RK4, and RK45, respectively. 
It is expected that RK4 requires more computation time as the gradient should be evaluated four times per a step, and it seems that the time-adaptive RK45 algorithm requires more frequent evaluations of the gradient. 

\subsubsection{Comparison with Other Optimization Schemes on Lie Groups}

\begin{figure}
    \centerline{
        \includegraphics[width=0.8\columnwidth]{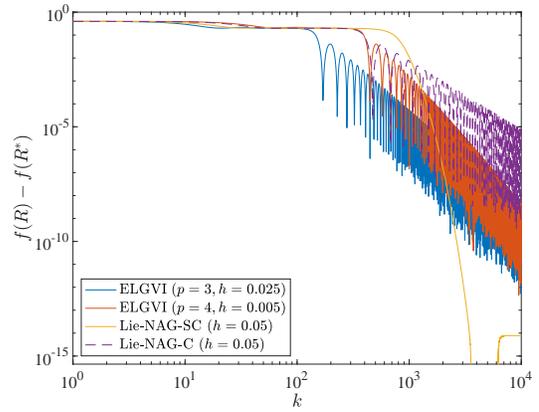}
    }
    \caption{Comparison with other accelerated optimization schemes on Lie groups}\label{fig:comp_accel}
\end{figure}

Finally, we compare ELGVI with other optimization schemes on Lie groups. 
In particular, we consider variationally accelerated Lie-group methods based on the NAG variational principle and operating splitting \cite{tao2020variational},  referred to as Lie-NAG-SC and Lie-NAG-C, which are conformally symplectic and group-structure preserving. Note that Lie-NAG-C corresponds to SPLT with $p=2$. 

Four cases are considered as marked in \Cref{fig:comp_accel} for varying $p$ and $h$.
Compared with Lie-NAG-C, ELGVI exhibits faster convergence at a higher order. 
This does not contradict Nesterov's oracle lower bound: the continuous Bregman dynamics with $p>2$ should be discretized by smaller steps as $t$ increases, and therefore, the asymptotic order of convergence is still $\mathcal{O}(1/k^2)$ as illustrated above.
However, since ELGVI uses a fixed stepsize, the initial error can decay faster than inverse quadratic, and depending on the level of accuracy required, we can take the advantage of it by employing early stopping.
On the other hand, Lie-NAG-SC demonstrates exponential convergence asymptotically when applied to strongly convex functions. 
Overall, if  moderate stopping criteria are employed, ELGVI may be preferred, as they exhibit the fastest initial decay of the cost function.

\subsection{Optimization on $\SO\times \Re^3$}

Next, we present an optimization problem on $\SO\times\Re^3$ to estimate the position and the attitude of a camera using the KITTI vision benchmark dataset~\cite{Geiger2013IJRR}.
This is to verify the performance of ELGVI for a non-convex function in a higher-dimensional Lie group, with more relevance to engineering practice. 
More specifically, we consider $N=516$ distinct features on a single image frame, where their 2D pixel coordinates in the image plane, and the actual 3D location in the world coordinates are given by $p^i\in\Re^3$ and $P^i\in\Re^4$, respectively as homogeneous coordinates. 
Assuming that the camera calibration matrix $K\in\Re^{3\times 3}$ is also known, we wish to estimate the pose $(R,x)\in\SO\times \Re^3$ of the camera.  

This is formulated as an optimization problem to minimize the reprojection error, which is the discrepancy between the actual pixel location of the features and the features projected to the image plane by the current estimate of $(R,x)$~\cite{ma2012invitation}.
For example, let $\tilde p^i\in\Re^3$ be the homogeneous coordinates for the feature corresponding to $P^i$ projected to the image plane by $(R,x)$. 
From the perspective camera model, 
\begin{align*}
    \lambda \tilde p^i = K[R, x]P^i,
\end{align*}
for $\lambda >0$.
The corresponding reprojected pixel is determined by the dehomogenization  of $\tilde p^i$, namely $H^{-1}(\tilde p^i)\in\Re^2$ corresponding to the first two elements of $\tilde p^i$ divided by the last element. 
The objective function is the sum of the reprojection error given by
\begin{align}
    \obj (R,x) =  \sum_{i=1}^N \| H^{-1}(p^i) - H^{-1}(\tilde p^i)\|^2.\label{eqn:obj_SE3}
\end{align}
\Cref{fig:comp_6} presents the optimization results by ELGVI, which are comparable to the benchmark examples presented for $\SO$. 
However, the terminal phase is relatively noisy, partially because the gradients of \eqref{eqn:obj_SE3} are evaluated numerically with a finite-difference rule. 
\Cref{fig:KITTI} illustrates the reprojected features before and after the optimization. 

\begin{figure}
    \centerline{
        \includegraphics[width=0.8\columnwidth]{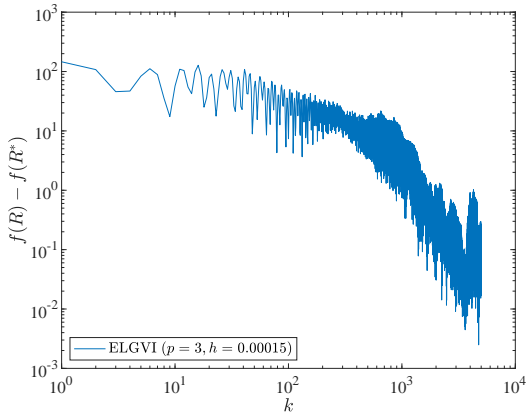}
    }
    \caption{Optimization on $\SO\times\Re^3$: convergence with respect to $k$}\label{fig:comp_6}
\end{figure}

\begin{figure}
    \centerline{
        \subfigure[Initial guess $(R_0,x_0)$]{\includegraphics[width=1.0\columnwidth]{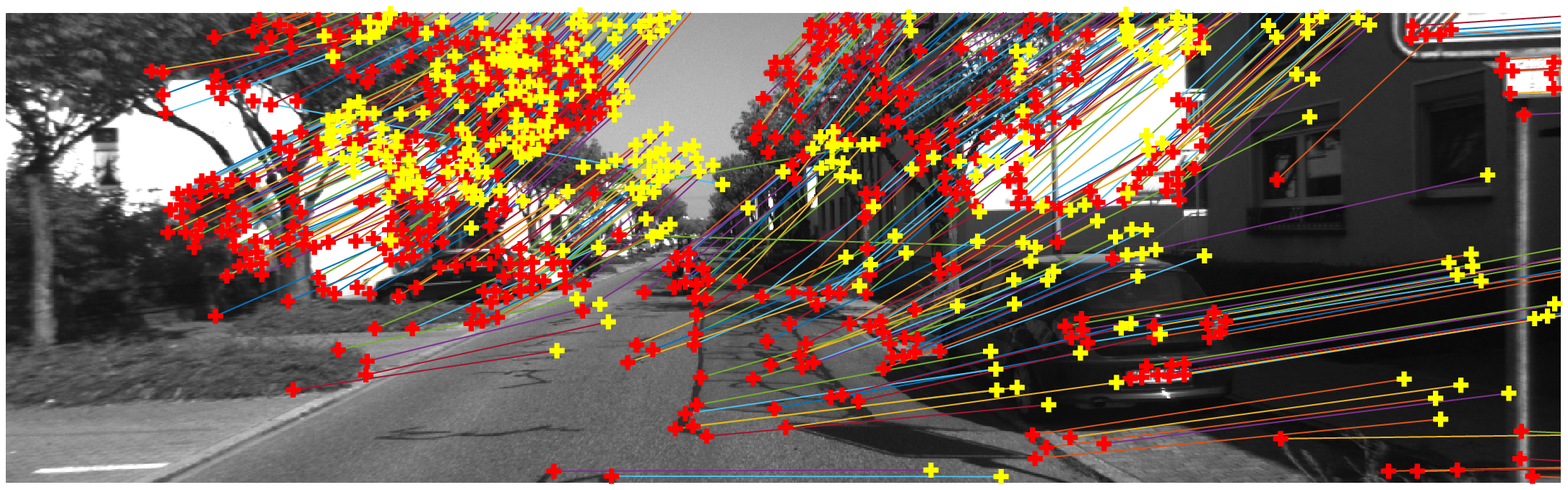}}
    }
    \centerline{
        \subfigure[Optimized $(R^*,x^*)$]{\includegraphics[width=1.0\columnwidth]{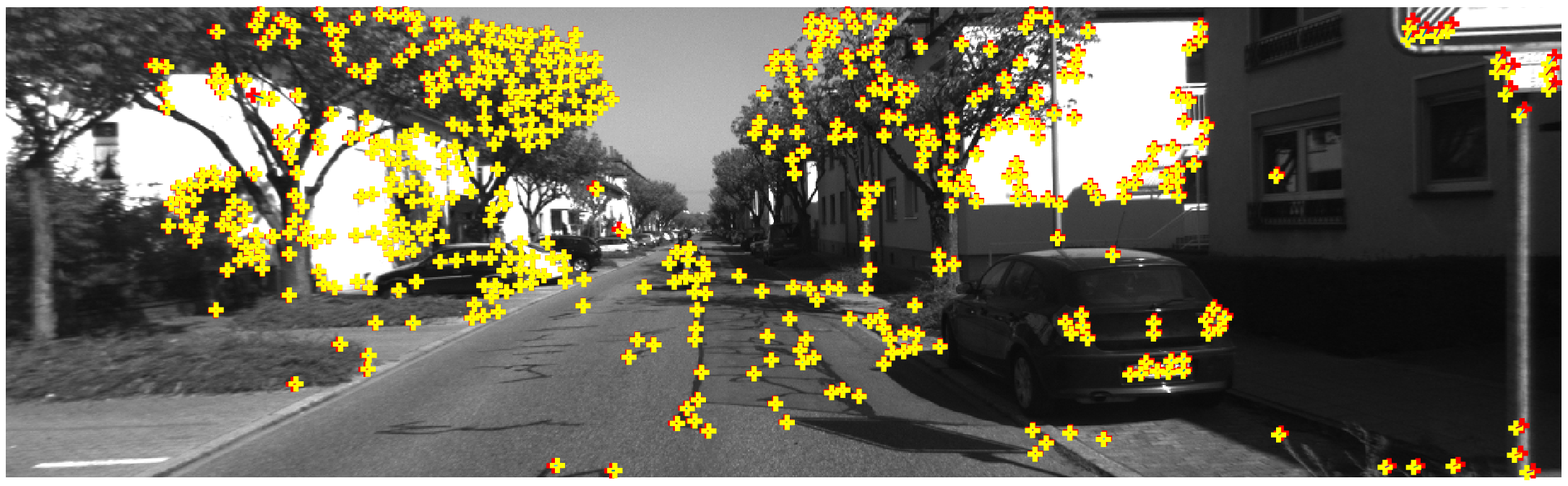}}
    }
    \caption{Reprojection error: the red $+$ markers denote the key points detected, and the yellow $+$ markers represent the key points projected by the estimated pose. The paired features are connected by solid lines. }\label{fig:KITTI}
\end{figure}

\section{Conclusions}

In this paper, we proposed a Lie group variational integrator for the Bregman Lagrangian dynamics on Lie groups, to construct an accelerated optimization scheme.
The variable stepsize prescribed by the extended variational principle exhibits an interesting convergence property, and the variational discretization is robust to the initial stepsize.
It would be interesting to explore the role of variable time-stepping in geometric discretizations of the Bregman dynamics especially compared with Hamiltonian variational integrators.

\bibliography{CDC21}

\begin{thebibliography}{10}
\providecommand{\url}[1]{#1}
\csname url@rmstyle\endcsname
\providecommand{\newblock}{\relax}
\providecommand{\bibinfo}[2]{#2}
\providecommand\BIBentrySTDinterwordspacing{\spaceskip=0pt\relax}
\providecommand\BIBentryALTinterwordstretchfactor{4}
\providecommand\BIBentryALTinterwordspacing{\spaceskip=\fontdimen2\font plus
\BIBentryALTinterwordstretchfactor\fontdimen3\font minus
  \fontdimen4\font\relax}
\providecommand\BIBforeignlanguage[2]{{%
\expandafter\ifx\csname l@#1\endcsname\relax
\typeout{** WARNING: IEEEtran.bst: No hyphenation pattern has been}%
\typeout{** loaded for the language `#1'. Using the pattern for}%
\typeout{** the default language instead.}%
\else
\language=\csname l@#1\endcsname
\fi
#2}}

\bibitem{Nes83}
Y.~Nesterov, ``A method of solving a convex programming problem with
  convergence rate $\mathcal{O}(1/k^2)$,'' \emph{Soviet Mathematics Doklady},
  vol.~27, no.~2, pp. 372--376, 1983.

\bibitem{nesterov2003introductory}
------, \emph{Introductory lectures on convex optimization: A basic course},
  2004.

\bibitem{su2016differential}
W.~Su, S.~Boyd, and E.~J. Candes, ``A differential equation for modeling
  nesterov's accelerated gradient method: Theory and insights,'' \emph{The
  Journal of Machine Learning Research}, vol.~17, no.~1, pp. 5312--5354, 2016.

\bibitem{wibisono2016variational}
A.~Wibisono, A.~C. Wilson, and M.~I. Jordan, ``A variational perspective on
  accelerated methods in optimization,'' \emph{proceedings of the National
  Academy of Sciences}, vol. 113, no.~47, pp. E7351--E7358, 2016.

\bibitem{betancourt2018symplectic}
M.~Betancourt, M.~I. Jordan, and A.~C. Wilson, ``On symplectic optimization,''
  \emph{arXiv preprint arXiv:1802.03653}, 2018.

\bibitem{HaLuWa2006}
E.~Hairer, C.~Lubich, and G.~Wanner, \emph{Geometric Numerical Integration:
  Structure-preserving algorithms for ordinary differential equations},
  2nd~ed.\hskip 1em plus 0.5em minus 0.4em\relax Berlin: Springer-Verlag, 2006.

\bibitem{tao2020variational}
M.~Tao and T.~Ohsawa, ``Variational optimization on {L}ie groups, with examples
  of leading (generalized) eigenvalue problems,'' in \emph{International
  Conference on Artificial Intelligence and Statistics}, 2020, pp. 4269--4280.

\bibitem{tao2010nonintrusive}
M.~Tao, H.~Owhadi, and J.~E. Marsden, ``Nonintrusive and structure preserving
  multiscale integration of stiff odes, sdes, and hamiltonian systems with
  hidden slow dynamics via flow averaging,'' \emph{Multiscale Modeling \&
  Simulation}, vol.~8, no.~4, pp. 1269--1324, 2010.

\bibitem{MarWesAN01}
J.~Marsden and M.~West, ``Discrete mechanics and variational integrators,'' in
  \emph{Acta Numerica}.\hskip 1em plus 0.5em minus 0.4em\relax Cambridge
  University Press, 2001, vol.~10, pp. 317--514.

\bibitem{LeZh2009}
M.~Leok and J.~Zhang, ``Discrete {H}amiltonian variational integrators,''
  \emph{IMA J. Numer. Anal.}, vol.~31, no.~4, pp. 1497--1532, 2011.

\bibitem{LeeLeoCMAME07}
T.~Lee, M.~Leok, and N.~McClamroch, ``Lie group variational integrators for the
  full body problem,'' \emph{Computer Methods in Applied Mechanics and
  Engineering}, vol. 196, pp. 2907--2924, May 2007.

\bibitem{DuScLe2021}
V.~Duruisseaux, J.~Schmitt, and M.~Leok, ``Adaptive {H}amiltonian variational
  integrators and symplectic accelerated optimization,'' \emph{arXiv preprint
  arXiv:1709.01975}, 2021.

\bibitem{hu2020brief}
J.~Hu, X.~Liu, Z.-W. Wen, and Y.-X. Yuan, ``A brief introduction to manifold
  optimization,'' \emph{Journal of the Operations Research Society of China},
  vol.~8, no.~2, pp. 199--248, 2020.

\bibitem{absil2009optimization}
P.-A. Absil, R.~Mahony, and R.~Sepulchre, \emph{Optimization Algorithms on
  Matrix Manifolds}.\hskip 1em plus 0.5em minus 0.4em\relax Princeton
  University Press, 2008.

\bibitem{MarRat99}
J.~Marsden and T.~Ratiu, \emph{Introduction to Mechanics and Symmetry},
  2nd~ed., ser. Texts in Applied Mathematics.\hskip 1em plus 0.5em minus
  0.4em\relax Springer-Verlag, 1999, vol.~17.

\bibitem{LeeLeo17}
T.~Lee, M.~Leok, and N.~McClamroch, \emph{Global Formulation of {L}agrangian
  and {H}amiltonian Dynamics on Manifolds}.\hskip 1em plus 0.5em minus
  0.4em\relax Springer, 2018.

\bibitem{KaMaOr1999}
C.~Kane, J.~Marsden, and M.~Ortiz, ``Symplectic-energy-momentum preserving
  variational integrators,'' \emph{Journal of Mathematical Physics}, vol.~40,
  no.~7, pp. 3353--3371, 1999.

\bibitem{nesterov2005smooth}
Y.~Nesterov, ``Smooth minimization of non-smooth functions,''
  \emph{Mathematical programming}, vol. 103, no.~1, pp. 127--152, 2005.

\bibitem{nesterov2008accelerating}
------, ``Accelerating the cubic regularization of {N}ewton's method on convex
  problems,'' \emph{Mathematical Programming}, vol. 112, no.~1, pp. 159--181,
  2008.

\bibitem{duruisseaux2021variational}
V.~Duruisseaux and M.~Leok, ``A variational formulation of accelerated
  optimization on {R}iemannian manifolds,'' \emph{arXiv preprint
  arXiv:2101.06552}, 2021.

\bibitem{alimisis2020continuous}
F.~Alimisis, A.~Orvieto, G.~B{\'e}cigneul, and A.~Lucchi, ``A continuous-time
  perspective for modeling acceleration in {R}iemannian optimization,'' in
  \emph{International Conference on Artificial Intelligence and
  Statistics}.\hskip 1em plus 0.5em minus 0.4em\relax PMLR, 2020, pp.
  1297--1307.

\bibitem{Lee08}
T.~Lee, ``Computational geometric mechanics and control of rigid bodies,''
  Ph.D. dissertation, University of Michigan, 2008.

\bibitem{WahSR65}
G.~Wahba, ``A least squares estimate of satellite attitude, {P}roblem 65-1,''
  \emph{SIAM Review}, vol.~7, no.~5, p. 409, 1965.

\bibitem{Geiger2013IJRR}
A.~Geiger, P.~Lenz, C.~Stiller, and R.~Urtasun, ``Vision meets robotics: The
  {KITTI} dataset,'' \emph{International Journal of Robotics Research (IJRR)},
  2013.

\bibitem{ma2012invitation}
Y.~Ma, S.~Soatto, J.~Kosecka, and S.~S. Sastry, \emph{An invitation to 3-{D}
  vision: from images to geometric models}.\hskip 1em plus 0.5em minus
  0.4em\relax Springer Science \& Business Media, 2012, vol.~26.

\end{thebibliography}
\bibliographystyle{IEEEtran}

\end{document}